\documentclass[11pt]{article}
\usepackage[T1]{fontenc}
\usepackage[utf8]{inputenc}
\usepackage{lmodern}
\usepackage{amsmath,amssymb,amsthm,mathtools}
\usepackage[margin=1in]{geometry}
\usepackage{microtype}
\usepackage{tikz}
\usepackage{float}
\usepackage{enumerate}
\usepackage{indentfirst}
\usepackage[colorlinks=true,linkcolor=blue,citecolor=blue,urlcolor=blue]{hyperref}

\theoremstyle{plain}
\newtheorem{theorem}{Theorem}[section]
\newtheorem{lemma}[theorem]{Lemma}
\newtheorem{question}[theorem]{Question}
\theoremstyle{remark}

\DeclareMathOperator{\dist}{dist}

\tikzset{
	vertex/.style={circle,fill=black,inner sep=1.7pt},
	cycleedge/.style={line width=1.25pt},
	outeredge/.style={bend right=32}
}
\newcommand{\gadgetcoordinates}{%
	\coordinate (a) at (0,2.05);
	\coordinate (b) at (-1.75,-1.05);
	\coordinate (c) at (1.75,-1.05);
	\coordinate (x) at (-0.72,0.73);
	\coordinate (y) at (0,-0.63);
	\coordinate (z) at (0.72,0.73);
	\coordinate (w) at (0,0.27);
}
\newcommand{\outertriangle}[1]{%
	\draw[#1] (a) to[outeredge] (b) to[outeredge] (c) to[outeredge] (a);
}
\newcommand{\gadgetedges}[1]{%
	\outertriangle{#1}
	\draw[#1] (a)--(x) (b)--(x) (b)--(y) (c)--(y) (c)--(z) (a)--(z);
	\draw[#1] (x)--(y)--(z)--cycle;
	\draw[#1] (w)--(x) (w)--(y) (w)--(z);
}
\newcommand{\gadgetlabels}{%
	\node[vertex,label=above:$a$] at (a) {};
	\node[vertex,label=below left:$b$] at (b) {};
	\node[vertex,label=below right:$c$] at (c) {};
	\node[vertex,label=left:$x$] at (x) {};
	\node[vertex,label=below:$y$] at (y) {};
	\node[vertex,label=right:$z$] at (z) {};
	\node[vertex,label=above:$w$] at (w) {};
}

\hypersetup{
	pdftitle={Non-Hamiltonian 3/2-tough plane triangulations},
	pdfauthor={Songling Shan},
	pdfkeywords={Hamiltonian cycle, plane triangulation, toughness, 2-factor}
}

\begin{document}

\title{Non-Hamiltonian $\frac{3}{2}$-Tough Plane Triangulations}
\author{
	Songling Shan\thanks{Department of Mathematics and Statistics,
		Auburn University, Auburn, AL 36849.
		Email: \texttt{szs0398@auburn.edu}.
		Partially supported by NSF grant DMS-2451895.}
}
\date{\today}
\maketitle

\begin{abstract}
	By Tutte's classic theorem of 1956 that every 4-connected planar graph is
	Hamiltonian, every planar graph of order at least three with toughness
	greater than $\frac{3}{2}$ is Hamiltonian. In 1999, Owens constructed a
	sequence of maximal planar graphs whose toughness approaches $\frac{3}{2}$
	from below and which do not contain even a 2-factor, and he asked whether
	there exists a maximal planar graph with toughness exactly $\frac{3}{2}$
	and with no 2-factor. In 2025, Shan constructed a $\frac{3}{2}$-tough plane
	triangulation with no 2-factor. In that construction, there are many pairs
	of vertices of degree $3$ that have a common neighbor. By imposing a
	distance condition on the vertices of degree $3$, Hao, Ma, Shan, and Yang
	recently proved that every $\frac{3}{2}$-tough plane triangulation of order
	at least three whose vertices of degree $3$ are pairwise at distance at
	least $3$ has a 2-factor, and they asked whether every such graph is
	Hamiltonian. We answer this question in the negative, and in fact prove the
	following stronger statement: for every positive integer $\ell$, there
	exists a $\frac{3}{2}$-tough non-Hamiltonian plane triangulation whose
	vertices of degree $3$ are pairwise at distance at least $\ell$. Thus,
	although the distance condition guarantees the existence of a 2-factor, it
	does not guarantee that the graph is Hamiltonian: the essential obstruction
	to a Hamiltonian cycle is a certain local configuration involving a vertex
	of degree $3$, rather than the proximity of such configurations in the
	graph.
\end{abstract}

\medskip
\noindent\textbf{Keywords.}
Hamiltonian cycle; plane triangulation; toughness; 2-factor

\section{Introduction}

We consider only finite simple graphs. Let $G$ be a graph. Denote by $V(G)$
and $E(G)$ the vertex set and the edge set of $G$, respectively, and by
$c(G)$ the number of components of $G$. For a vertex $v$ of $G$, let
$N_G(v)$ be the set of neighbors of $v$ and $d_G(v)=|N_G(v)|$ the degree of
$v$. For $X\subseteq V(G)$, let $G[X]$ be the subgraph of $G$ induced by
$X$, and let $G-X=G[V(G)\setminus X]$; we write $G-v$ for $G-\{v\}$. For
two vertices $u,v\in V(G)$, let $\dist_G(u,v)$ be the length of a shortest
path connecting $u$ and $v$ in $G$. A \emph{plane triangulation} is a plane
graph of order at least three in which every face is bounded by a triangle;
plane triangulations are precisely the plane embeddings of maximal planar
graphs of order at least three.

Let $t\ge 0$ be a real number. A graph $G$ is \emph{$t$-tough} if
$|S|\ge t\,c(G-S)$ for every $S\subseteq V(G)$ with $c(G-S)\ge 2$. The
\emph{toughness} $\tau(G)$ of a noncomplete graph $G$ is the largest real
number $t$ for which $G$ is $t$-tough; as usual, $\tau(K_n)=\infty$. The
concept of toughness was introduced by Chv\'atal~\cite{Chvatal1973}.

A \emph{2-factor} of a graph is a spanning 2-regular subgraph. Thus, a
Hamiltonian cycle is a 2-factor with exactly one component. A graph with
toughness greater than $\frac{3}{2}$ has no vertex cut of size at most
three, so by Tutte's theorem that every 4-connected planar graph is
Hamiltonian~\cite{Tutte1956}, every planar graph of order at least three
with toughness greater than $\frac{3}{2}$ is Hamiltonian.

In 1999, Owens~\cite{Owens1999} constructed a sequence of maximal planar
graphs whose toughness approaches $\frac{3}{2}$ from below and which do not
contain even a 2-factor. He then asked whether there exists a maximal planar
graph with toughness exactly $\frac{3}{2}$ and with no 2-factor. Bauer,
Broersma, and Schmeichel~\cite{Bauer2006} later wrote: ``One of the
challenging open problems in this area is to determine whether every
$\frac{3}{2}$-tough maximal planar graph has a 2-factor. If so, are they
all Hamiltonian? We also do not know if a $\frac{3}{2}$-tough planar graph
has a 2-factor.''

Answering these questions, Shan~\cite{Shan2025} constructed a
$\frac{3}{2}$-tough plane triangulation with no 2-factor. In that
construction, there are many pairs of vertices of degree $3$ that have a
common neighbor. By imposing a distance condition on vertices of degree
$3$, Hao, Ma, Shan, and Yang~\cite{HaoMaShanYang} recently proved the
following result.

\begin{theorem}[Hao, Ma, Shan, and Yang~\cite{HaoMaShanYang}]
	\label{thm:twofactor}
	Let $G$ be a $\frac{3}{2}$-tough plane triangulation of order at least
	three. If $\dist_G(u,v)\ge 3$ for every two distinct vertices $u$ and
	$v$ of degree $3$, then $G$ has a 2-factor.
\end{theorem}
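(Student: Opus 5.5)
We would prove Theorem~\ref{thm:twofactor} through Tutte's 2-factor theorem (the $f$-factor criterion with $f\equiv 2$). Suppose $G$ has no 2-factor. Then there are disjoint sets $S,T\subseteq V(G)$ with
\[
\delta(S,T)=2|S|-2|T|+\sum_{y\in T}d_{G-S}(y)-q(S,T)<0,
\]
where $q(S,T)$ counts the components $D$ of $G-(S\cup T)$ with $e_G(D,T)$ odd. Among all such pairs we choose one ("barrier") with $|T|$ maximal and then $|S|$ minimal. The standard consequences are:
\begin{itemize}
\item[(i)] $T$ is independent;
\item[(ii)] every vertex of $T$ has at most one neighbour in each odd component, so each $y\in T$ satisfies $d_{G-S}(y)\le 1$ after contracting components;
\item[(iii)] every even component is absent, i.e.\ it can be absorbed into $T$.
\end{itemize}

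Next we build an auxiliary bipartite planar multigraph $H$. Contract each odd component $D$ to a vertex, keep $T$, and join $y\in T$ to $D$ when they are adjacent. Since $G$ is a triangulation, $H$ together with $S$ is planar. Euler-type counting then bounds $|T|+q$ in terms of $|S|$. The key quantity is the number of components of $G-S'$ for suitable $S'\supseteq S$. Taking $S'=S$ gives components consisting of the odd components and the vertices of $T$. Toughness $\frac32$ then yields
\[
|S|\ge \tfrac32\bigl(|T|+q\bigr)
\]
whenever at least two components remain. Alternatively, we enlarge $S$ by some vertices of $T$ to separate more components. Combining the toughness inequality with the barrier inequality $2|T|-\sum_{y\in T}d_{G-S}(y)+q>2|S|$ should give a contradiction. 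The exception is when many vertices of $T$ have $d_{G-S}(y)=0$ with $d_G(y)=3$, i.e.\ vertices of $T$ that are isolated after deleting $S$ and whose neighbourhoods are triangles in $S$.

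The main obstacle is exactly these degree-3 vertices. In Shan's counterexample such vertices share neighbours, and the tight counting uses that sharing. Our plan is to use planarity to charge each vertex $y\in T$ to faces of the plane graph induced on $S\cup T\cup\{\text{contracted components}\}$. A vertex $y$ of degree at least $4$ in $G$ with all neighbours in $S$ contributes more than needed in the Euler bound. A degree-3 vertex $y$ sits in a separating triangle $x_1x_2x_3\subseteq S$. The distance condition $\dist_G(u,v)\ge 3$ guarantees that no two degree-3 vertices share a neighbour, so the triangles $N(y)$ are vertex-disjoint. We would then refine the toughness application: delete $S$ minus one vertex from each such triangle and count components again, or apply $\frac32$-toughness to $S\cup\{\text{suitable vertices}\}$. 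This recovers the missing slack, with each disjoint triangle contributing an extra $\frac12$.

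Finally we check the boundary cases where $G-S$ has only one component, or where $S$ is small ($|S|\le 3$). These are ruled out because a $\frac32$-tough triangulation is $3$-connected and every separating triangle isolates only a degree-3 vertex. The delicate step is the discharging/Euler count, which must produce strict inequality using the vertex-disjointness of the degree-3 neighbourhoods. I expect most of the work to lie there.
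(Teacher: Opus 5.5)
The paper gives no proof of this statement; it is quoted from Hao, Ma, Shan, and Yang. Your proposal therefore has to stand on its own, and it does not. The one concrete inequality it derives is false, and the step meant to replace it is only announced.

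You claim that the components of $G-S$ are the odd components together with the vertices of $T$, whence $|S|\ge\frac32(|T|+q)$. Your barrier inequality gives $2|S|<2|T|-\sum_{y\in T}d_{G-S}(y)+q\le 2|T|+q$. Together these yield $|T|+2q<0$ at once (the case $|T|+q\le 1$ is trivial), with no use of planarity or of the distance hypothesis. That would prove every $\frac32$-tough plane triangulation has a 2-factor, which Shan's graph $G_0$ from Section~\ref{sec:counterexample} refutes.

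The error is that a vertex $y\in T$ with $d_{G-S}(y)\ge 1$ lies in the same component of $G-S$ as its neighbours outside $S$. Only the vertices of $T_0=\{y\in T: d_{G-S}(y)=0\}$ are singleton components.

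Your barrier properties are also garbled:
\begin{itemize}
\item The correct form of (ii) is $e_G(y,D)\le 1$ for each component $D$ of $G-(S\cup T)$. So $d_{G-S}(y)$ is the number of components $y$ touches, not at most $1$.
\item (iii) is unjustified.
\item (i) and (ii) come from choosing $|S|$ maximum and then $|T|$ minimum, not from your choice. With $|T|$ maximum, $(\emptyset,V)$ is the chosen barrier in any tree with an edge, and $V$ is not independent.
\end{itemize}

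After the correction, toughness must be applied to a set such as $S\cup\{y\in T: d_{G-S}(y)\ge 2\}$. For independent $T$, this leaves exactly $|T_0|$ plus the number of components of $G-(S\cup T)$ as components. The additional deleted vertices then have to be paid for. The contradiction must come from a genuinely joint count, combining three ingredients:
\begin{itemize}
\item Euler's formula on $G[S]$, or on the graph obtained by contracting the components of $G-(S\cup T)$;
\item the barrier inequality with its parity term $q$;
\item toughness.
\end{itemize}
This count has to control the vertices of $T$ with one or two neighbours outside $S$ and the components they attach to, not just the isolated vertices of $T_0$.

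Your observation that the distance hypothesis makes the neighbourhood triangles of degree-$3$ vertices pairwise vertex-disjoint is correct, and is presumably where the hypothesis enters. But you never turn it into an inequality. ``Should give a contradiction'' and ``each disjoint triangle contributing an extra $\frac12$'' stand in for the entire content of the theorem.

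Finally, the boundary-case claim that a separating triangle in a $\frac32$-tough triangulation isolates a degree-$3$ vertex is false. Toughness only forces exactly two components. Two octahedra glued along a face form a $\frac32$-tough triangulation with a separating triangle having three vertices on each side.
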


They then posed the corresponding question for Hamiltonian cycles.

\begin{question}[Hao, Ma, Shan, and Yang~\cite{HaoMaShanYang}]
	\label{ques:hcyc}
	Is every $\frac{3}{2}$-tough plane triangulation of order at least three
	Hamiltonian if the distance between every two vertices of degree $3$ is
	at least $3$?
\end{question}

We answer this question in the negative. In fact, the lower bound on the
distance can be made arbitrarily large.

\begin{theorem}\label{thm:main}
	For every positive integer $\ell$, there exists a plane triangulation
	$G_\ell$ with $\tau(G_\ell)=\frac{3}{2}$ such that the following
	statements hold:
	\begin{enumerate}[(i)]
		\item
		$\dist_{G_\ell}(u,v)\ge \ell$ for every two distinct vertices $u$
		and $v$ of degree $3$;
		\item
		$G_\ell$ has a 2-factor; and
		\item
		$G_\ell$ is not Hamiltonian.
	\end{enumerate}
\end{theorem}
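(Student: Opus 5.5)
We will build $G_\ell$ from a suitable base plane triangulation by inserting copies of the gadget drawn in the preamble into chosen faces. The gadget sits inside a facial triangle $abc$. It adds an inner triangle $xyz$ with $x$ adjacent to $a,b$, with $y$ adjacent to $b,c$, and with $z$ adjacent to $c,a$. It also adds a vertex $w$ of degree $3$ joined to $x,y,z$. So each gadget contributes exactly one vertex of degree $3$, namely $w$, and every new vertex other than $w$ has degree at least $4$.

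\textbf{Local obstruction.} We first analyse how a Hamiltonian cycle $C$ can pass through a gadget. Since $d(w)=3$, the cycle $C$ uses two of the edges $wx,wy,wz$; say it uses $x w y$. Then $z$ must be entered and left through two of its neighbours $x,y,c,a$. A short case analysis shows that $C$ restricted to the gadget is a path, with ends on $\{a,b,c\}$, covering $x,y,z,w$. Up to symmetry, $C$ enters at one outer vertex and leaves at another. The net effect is that the gadget behaves like a single vertex of degree $3$ placed in the face $abc$: any Hamiltonian cycle of the whole graph induces a Hamiltonian cycle of the graph in which each gadget is replaced by one vertex adjacent to $a,b,c$.

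\textbf{Choice of base graph.} Take a non-Hamiltonian plane triangulation $H$ obtained in the standard way. Start from a triangulation $T$ with more faces than vertices, large enough and with large face-distance (for instance an iterated subdivided icosahedral-type triangulation). Then stack a vertex into a set $F$ of pairwise far-apart faces with $|F|>|V(T)|$; by the usual independent-set counting argument the result is non-Hamiltonian. Now replace each stacked vertex by a gadget. By the obstruction above, the result $G_\ell$ is non-Hamiltonian. Choosing $T$ of large girth-type face distance, for example by repeatedly refining $T$ so that chosen faces are at distance at least $\ell$ apart, gives (i), since the degree-$3$ vertices are exactly the $w$'s and $T$ should have minimum degree at least $5$.

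\textbf{Toughness and 2-factor.} For (ii), we exhibit a 2-factor directly. In each gadget, use the triangle $w$--$x$--$y$ and route $z$ together with the base graph. Alternatively, once toughness $\frac32$ is established, invoke Theorem~\ref{thm:twofactor}, since $\ell\ge3$ may be assumed. The main obstacle is proving $\tau(G_\ell)\ge\frac32$. Equality comes from $S=\{x,y,z\}$, which isolates $w$; more precisely, $S=N(w)\cup\cdots$ gives ratio exactly $\frac32$ when taken over two gadgets. This is exactly where the counting condition $|F|>|V(T)|$ fights against toughness: cutting out all $a,b,c$ of the faces in $F$ must not create too many components. To prove toughness, we take a cutset $S$ and classify the components of $G_\ell-S$ as either lying inside a gadget or meeting $T$. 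For a gadget component we charge at least $\frac32$ vertices of $S$ from that gadget's $x,y,z,a,b,c$. For the components meeting $T$ we use the $5$-connectivity-like robustness of $T$ with faces far apart, via Euler's formula on the planar minor obtained by contracting components. The balance needs $|F|$ to be only slightly more than $|V(T)|$ while each $S$-vertex is shared by few faces of $F$. We expect this charging argument, tuning $F$ so that both non-Hamiltonicity and $\frac32$-toughness hold, to be the hardest step.
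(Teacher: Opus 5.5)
\textbf{Verdict: there is a genuine gap.} Your analysis of how a Hamiltonian cycle crosses a gadget is correct; it is exactly the backward direction of part (iii) of the Tetrahedral Replacement Theorem. The failure is in the choice of base graph, and no charging argument can fix it.

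Your non-Hamiltonicity certificate is the independent-set count for a Kleetope-type graph, and that count is itself a violation of toughness. Each gadget $\{x,y,z,w\}$ is attached only to the three vertices of its face of $T$. So $G_\ell-V(T)$ has exactly $|F|$ components, and $|F|>|V(T)|$ gives $\tau(G_\ell)\le |V(T)|/|F|<1$. Thus $G_\ell$ is not even $1$-tough, let alone $\frac{3}{2}$-tough. More generally, any certificate of the form ``deleting $S$ leaves more than $|S|$ components'' contradicts $1$-toughness, so a $\frac{3}{2}$-tough non-Hamiltonian graph can never be certified this way.

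Your own distance requirement also rules out the count. Two stacked vertices in faces sharing a vertex are at distance at most $2$, and their successors at distance at most $4$. So for $\ell\ge 5$ the faces of $F$ must be pairwise vertex-disjoint, which forces $3|F|\le |V(T)|<|F|$.

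The missing idea is to start from a graph that is \emph{already} $\frac{3}{2}$-tough and non-Hamiltonian. The gadget is then used as an operation that preserves both properties while spreading the degree-$3$ vertices apart. The paper proceeds as follows:
\begin{enumerate}[(1)]
\item Take the $\frac{3}{2}$-tough plane triangulation $G_0$ with no 2-factor from Shan's 2025 construction.
\item Replace \emph{every} vertex of degree $3$ by your gadget. These vertices are pairwise nonadjacent, so the replacements commute.
\item Iterate this $\lceil \ell/2\rceil$ times.
\end{enumerate}

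Three properties of a single replacement do the work.
\begin{enumerate}[(i)]
\item $\frac{3}{2}$-toughness is preserved. This is a purely local case analysis on $k=|S\cap\{x,y,z,w\}|$, comparing $c(R_v(G)-S)$ with $c(G-S_0)$ where $S_0=S\setminus\{x,y,z,w\}$, or with $c(G-(S_0\cup\{v\}))$ when $k=4$. No global Euler-formula charging is needed.
\item Hamiltonicity is equivalent before and after. This is your local analysis together with the easy forward direction.
\item Distances between old vertices are unchanged, while $\dist(w,q)=\dist(v,q)+1$ for every old vertex $q$.
\end{enumerate}
The degree-$3$ vertices after a round are exactly the new successors. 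Hence each round adds $2$ to every pairwise distance between degree-$3$ vertices.

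Property (ii) of the theorem then follows from Theorem~\ref{thm:twofactor} rather than from an explicit 2-factor. Finally, $\tau=\frac{3}{2}$ is attained by deleting the neighbourhood of any degree-$3$ vertex.
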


Thus, although the distance condition guarantees the existence of a
2-factor, it does not guarantee that the graph is Hamiltonian. The
essential obstruction to a Hamiltonian cycle is a certain local
configuration involving a vertex of degree $3$, rather than the proximity
of such configurations in the graph.

To prove Theorem~\ref{thm:main}, we introduce an operation that we call
\emph{tetrahedral replacement}. Let $G$ be a plane triangulation, and let
$v\in V(G)$ be a vertex of degree $3$ with $N_G(v)=\{a,b,c\}$. The three
faces of $G$ incident with $v$ are the triangles $vabv$, $vbcv$, and $vcav$;
in particular, $a,b,c$ induce a triangle in $G$. Delete $v$, add four new
vertices $x,y,z,w$, and add edges so that $\{x,y,z,w\}$ induces a copy of
$K_4$ and
\[
N(x)\cap\{a,b,c\}=\{a,b\},\qquad
N(y)\cap\{a,b,c\}=\{b,c\},\qquad
N(z)\cap\{a,b,c\}=\{c,a\}.
\]
No other edges incident with $x,y,z,w$ are added. Placing $x,y,z$ inside the
triangle $abc$ and $w$ inside the triangle $xyz$, as in
Figure~\ref{fig:replacement}, we obtain a plane graph, which we denote by
$R_v(G)$. We call this operation the \emph{tetrahedral replacement} at $v$,
and we call $w$ the \emph{successor} of $v$. Note that
$R_v(G)-\{x,y,z,w\}=G-v$.

\begin{figure}[ht]
	\centering
	\begin{minipage}[c]{0.38\textwidth}
		\centering
		\begin{tikzpicture}[scale=1.02]
			\gadgetcoordinates
			\coordinate (v) at (0,0.05);
			\outertriangle{}
			\draw (v)--(a) (v)--(b) (v)--(c);
			\node[vertex,label=above:$a$] at (a) {};
			\node[vertex,label=below left:$b$] at (b) {};
			\node[vertex,label=below right:$c$] at (c) {};
			\node[vertex,label=right:$v$] at (v) {};
		\end{tikzpicture}

		\small (a) The vertex $v$ in $G$.
	\end{minipage}
	\hfill
	\begin{minipage}[c]{0.55\textwidth}
		\centering
		\begin{tikzpicture}[scale=1.02]
			\gadgetcoordinates
			\gadgetedges{}
			\gadgetlabels
		\end{tikzpicture}

		\small (b) The vertices $x,y,z,w$ in $R_v(G)$.
	\end{minipage}
	\caption{The tetrahedral replacement at a vertex $v$ of degree $3$. All
		edges outside the triangle $abc$ remain unchanged.}
	\label{fig:replacement}
\end{figure}

The tetrahedral replacement has the following properties.

\begin{theorem}[Tetrahedral Replacement Theorem]
	\label{thm:tetrahedral-replacement}
	Let $G$ be a plane triangulation, let $v\in V(G)$ be a vertex of degree
	$3$, and let $G'=R_v(G)$. Let $w$ be the successor of $v$. Then the
	following statements hold:
	\begin{enumerate}[(i)]
		\item $G'$ is a plane triangulation;
		\item if $G$ is $\frac{3}{2}$-tough, then $G'$ is
		$\frac{3}{2}$-tough;
		\item $G'$ is Hamiltonian if and only if $G$ is Hamiltonian; and
		\item for all $p,q\in V(G)\setminus\{v\}$,
		\[
		\dist_{G'}(p,q)=\dist_{G}(p,q)
		\qquad\text{and}\qquad
		\dist_{G'}(w,q)=\dist_{G}(v,q)+1.
		\]
	\end{enumerate}
\end{theorem}

In Section~\ref{sec:counterexample} we deduce Theorem~\ref{thm:main} from
Theorem~\ref{thm:tetrahedral-replacement}, which is then proved in
Section~\ref{sec:replacement}. In Section~\ref{sec:twofactors} we explain
why the argument for Hamiltonian cycles in
Theorem~\ref{thm:tetrahedral-replacement}(iii) has no analogue for
2-factors.

\section{Proof of Theorem~\ref{thm:main}}\label{sec:counterexample}

For a graph $G$, let $D_3(G)$ denote the set of vertices of degree $3$ in
$G$. We first observe that in a plane triangulation other than $K_4$, the
vertices of degree $3$ are pairwise nonadjacent.

\begin{lemma}\label{lem:independent}
	Let $G$ be a plane triangulation with $G\ne K_4$. Then no two vertices of
	degree $3$ in $G$ are adjacent. Consequently, $\dist_G(u,v)\ge 2$ for
	any two distinct vertices $u,v\in D_3(G)$.
\end{lemma}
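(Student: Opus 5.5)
Suppose, for a contradiction, that $u$ and $v$ are adjacent vertices of degree $3$ in $G$. I will show that this forces $G=K_4$.

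Let $N_G(u)=\{v,p,q\}$. In a plane triangulation the neighbours of a vertex of degree $d\ge 3$ can be ordered so that consecutive ones are adjacent and span faces with it, so the link of a degree-$3$ vertex is a triangle. The three faces at $u$ are therefore $uvpu$, $upqu$ and $uqvu$. In particular $p$ and $q$ are both adjacent to $v$. Since $d_G(v)=3$ and $u,p,q\in N_G(v)$, we get $N_G(v)=\{u,p,q\}$.

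Now consider $H=G[\{u,v,p,q\}]$. Every pair is adjacent: $uv$, $up$, $uq$ and $pq$ come from the link of $u$, and $vp$ and $vq$ come from the link of $v$. So $H\cong K_4$.

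It remains to show $V(G)=\{u,v,p,q\}$. The faces incident with $u$ are the three triangles at $u$, and likewise the faces at $v$ are $vupv$, $vpqv$ and $vqu v$. Together these account for four triangular faces, namely $uvp$, $upq$, $uqv$ and $vpq$. These are exactly the four faces of the planar embedding of $K_4$ on $\{u,v,p,q\}$. The union of their closures is therefore the whole sphere, so no other vertex can be placed in any face. An alternative is to say that the cycle $pqp$ of $K_4$ minus $u,v$ leaves no room. Cleanest is Euler's formula: a triangulation on $n$ vertices has $2n-4$ faces, and the four faces above already tile the sphere, so $2n-4=4$ and $n=4$. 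Hence $G=K_4$, a contradiction.

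The consequence then follows immediately. Distinct vertices of $D_3(G)$ are nonadjacent, so their distance is not $1$; it is also not $0$, since the vertices are distinct. (Distance is finite because triangulations are connected, and in any case it is at least $2$.)

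The only delicate step is the claim that the four faces exhaust the sphere. I would argue it directly: each edge of $H$ lies on exactly two faces of $G$, and these are already among the four listed faces. So the four faces form a closed surface with no boundary inside the sphere, that is, a $2$-sphere, and nothing else remains.
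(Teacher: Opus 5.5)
Your proof is correct and follows the paper's route. From the adjacency of $u$ and $v$ you get $N_G(u)=\{v,p,q\}$ and $N_G(v)=\{u,p,q\}$. The four triangles of the resulting $K_4$ are then faces of $G$, and these faces exhaust the sphere. The only difference is how you justify that last step: you note that each of the six edges of this $K_4$ already has both of its incident faces among the four, whereas the paper observes that each face is the side of its triangle not containing the fourth vertex.

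Your Euler-formula remark presupposes the very tiling it is meant to establish, and ``the cycle $pqp$'' is not a cycle. So it is the edge argument in your final paragraph that actually carries this step.
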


\begin{proof}
	Suppose to the contrary that $u,v\in D_3(G)$ are adjacent. Then
	$|V(G)|\ge 4$, so every edge of $G$ lies on exactly two faces. Let $a$
	and $b$ be the third vertices of the two faces incident with the edge
	$uv$. Then $a\ne b$, since otherwise $u$ would have only the two
	neighbors $v$ and $a$. As $d_G(u)=d_G(v)=3$, we have $N_G(u)=\{v,a,b\}$
	and $N_G(v)=\{u,a,b\}$. Since any two consecutive neighbors of a vertex
	in a plane triangulation form a face together with that vertex, the
	faces incident with $u$ or $v$ are exactly
	\[
	uvau,\quad uvbu,\quad uabu,\quad vabv.
	\]
	Each of these four triangles bounds a face of $G$, which is necessarily
	the side of the triangle not containing the fourth vertex of
	$\{u,v,a,b\}$. These four regions are precisely the four faces of the
	plane graph $G[\{u,v,a,b\}]\cong K_4$, and they cover every point of the
	plane not on this $K_4$. Hence $V(G)=\{u,v,a,b\}$ and $G=K_4$, a
	contradiction.
\end{proof}

Let $G\ne K_4$ be a plane triangulation, and let
$D_3(G)=\{v_1,\ldots,v_m\}$. By Lemma~\ref{lem:independent}, the vertices
$v_1,\ldots,v_m$ are pairwise nonadjacent. The tetrahedral replacement at
$v_i$ changes $G$ only inside the three faces incident with $v_i$, and it
does not change the degree or the neighborhood of any vertex outside
$N_G(v_i)\cup\{v_i\}$. Since $v_j\notin N_G(v_i)$ for $j\ne i$, the vertex
$v_j$ still has degree $3$ and the same three neighbors after the
replacement at $v_i$. Hence the tetrahedral replacements at
$v_1,\ldots,v_m$ can be performed one after another, and the resulting
graph does not depend on the order in which they are performed. We denote
this graph by $\mathcal{R}(G)$; if $m=0$, then $\mathcal{R}(G)=G$. For
$1\le i\le m$, let $w_i$ denote the successor of $v_i$, so that
$t_1,\ldots,w_m$ are vertices of $\mathcal{R}(G)$.

\begin{lemma}\label{lem:iterate}
	Let $G\ne K_4$ be a plane triangulation with $D_3(G)=\{v_1,\ldots,v_m\}$,
	and let $w_i$ be the successor of $v_i$ for $1\le i\le m$. Then the
	following statements hold:
	\begin{enumerate}[(a)]
		\item $\mathcal{R}(G)$ is a plane triangulation, which is
		$\frac{3}{2}$-tough if $G$ is $\frac{3}{2}$-tough, and which is
		Hamiltonian if and only if $G$ is Hamiltonian;
		\item $D_3(\mathcal{R}(G))=\{t_1,\ldots,w_m\}$; and
		\item $\dist_{\mathcal{R}(G)}(w_i,w_j)=\dist_G(v_i,v_j)+2$ for all
		distinct $i,j\in\{1,\ldots,m\}$.
	\end{enumerate}
\end{lemma}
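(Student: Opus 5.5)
The plan is to perform the replacements one at a time, $G=G_0, G_1=R_{v_1}(G_0),\ldots,G_m=R_{v_m}(G_{m-1})=\mathcal{R}(G)$, and apply Theorem~\ref{thm:tetrahedral-replacement} at each step. This is legitimate because, as observed before the lemma, $v_i$ still has degree $3$ in $G_{i-1}$ with the same neighbours $N_G(v_i)$. Part (a) then follows by induction on $i$. By Theorem~\ref{thm:tetrahedral-replacement}(i), each $G_i$ is a plane triangulation. By (ii), $\frac{3}{2}$-toughness is inherited from $G_{i-1}$. By (iii), $G_i$ is Hamiltonian if and only if $G_{i-1}$ is, and chaining these equivalences gives (a).

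For (b), I will compute degrees directly in $\mathcal{R}(G)$.
\begin{itemize}
\item \emph{New vertices.} In each gadget, $w$ is adjacent exactly to $x,y,z$, so $d(w_i)=3$. Each of $x,y,z$ has three neighbours inside the $K_4$ and two among $a,b,c$, so it has degree $5$.
\item \emph{Old vertices not in $D_3(G)\cup\bigcup_i N_G(v_i)$.} Their degrees are unchanged, so they were and remain of degree $\neq 3$.
\item \emph{Neighbours $a\in N_G(v_i)$.} A replacement at $v_i$ removes the edge $av_i$ and adds edges from $a$ to two of $x,y,z$, a net gain of $+1$. If $a$ is adjacent to $k\ge 1$ degree-$3$ vertices of $G$, its new degree is $d_G(a)+k$. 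By Lemma~\ref{lem:independent}, $a\notin D_3(G)$, so $d_G(a)\ge 4$ (a triangulation with $\ge 5$ vertices has minimum degree $\ge 3$, hence $\ge 4$ for non-$D_3$ vertices). The new degree is therefore at least $5$.
\item \emph{Removed vertices.} Every $v_i$ has been deleted.
\end{itemize}
Hence $D_3(\mathcal{R}(G))=\{w_1,\ldots,w_m\}$. The case $|V(G)|=3$ has $m=0$ and is trivial.

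For (c), I need distances between two successors. One could iterate Theorem~\ref{thm:tetrahedral-replacement}(iv), but it only controls distances involving one successor at a time, and intermediate replacements delete vertices. So I would track carefully through the steps. Applying (iv) to $G_{i-1}$ at $v_i$ and to $G_{j-1}$ at $v_j$ yields $\dist(w_i,w_j)=\dist_G(v_i,v_j)+2$, provided distances among vertices other than the current $v_k$ are preserved at every step. That preservation is exactly the first clause of (iv).

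In detail, fix $i<j$. By (iv) repeatedly, distances among vertices of $G_{i-1}$ other than $v_i,\ldots,v_m$ are preserved, but $v_j$ is itself still present in $G_{i-1}$. Step $i$ gives $\dist_{G_i}(w_i,v_j)=\dist_{G_{i-1}}(v_i,v_j)+1$. Later steps $k\ne j$ preserve distances between $w_i$ and $v_j$, since both differ from $v_k$. Step $j$ gives $\dist_{G_j}(w_i,w_j)=\dist_{G_{j-1}}(v_j,w_i)+1$. Steps after $j$ preserve this distance.

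The main subtlety I expect is that $\dist_{G_{i-1}}(v_i,v_j)=\dist_G(v_i,v_j)$, even though earlier steps deleted other $v_k$'s. This follows from (iv) because both $v_i$ and $v_j$ differ from each deleted $v_k$. Chaining the equalities gives (c).
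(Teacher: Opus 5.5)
Your proposal is correct and follows the paper's proof. You get (a) by iterating Theorem~\ref{thm:tetrahedral-replacement}(i)--(iii), (b) by a degree count that uses Lemma~\ref{lem:independent} to get $d_G(a)\ge 4$ for neighbours of degree-$3$ vertices, and (c) by chaining the two parts of Theorem~\ref{thm:tetrahedral-replacement}(iv). The only cosmetic difference is in (c): the paper reorders the replacements so that those at $v_i$ and $v_j$ come last, whereas you keep the fixed order and check that each intermediate step preserves the relevant distance by the first part of (iv); both are valid.
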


\begin{proof}
	Statement (a) follows from $m$ applications of
	Theorem~\ref{thm:tetrahedral-replacement}(i)--(iii).

	For (b), consider the replacement at $v_i$, performed on the graph
	obtained from $G$ by the replacements at $v_1,\ldots,v_{i-1}$. By the
	discussion preceding the lemma, $v_i$ has the same three neighbors in
	this graph as in $G$, and their degrees are at least their degrees in
	$G$, which are at least $4$: every vertex of a plane triangulation of
	order at least four has degree at least $3$, and the neighbors of $v_i$
	do not have degree $3$ by Lemma~\ref{lem:independent}. The
	replacement at $v_i$ deletes $v_i$, creates one new vertex $w_i$ of
	degree $3$ and three new vertices of degree $5$, increases the degrees
	of the three neighbors of $v_i$ by one, and leaves all other degrees
	unchanged. Hence, after all $m$ replacements, the vertices of degree $3$
	are exactly $t_1,\ldots,w_m$.

	For (c), fix distinct $i$ and $j$, and perform the replacements so that
	those at $v_i$ and $v_j$ come last. Let $G_1$ be the graph obtained from
	$G$ by the replacements at all $v_k$ with $k\ne i,j$, and let
	$G_2=R_{v_i}(G_1)$, so that $\mathcal{R}(G)=R_{v_j}(G_2)$. In each of
	the first $m-2$ replacements, $v_i$ and $v_j$ are old vertices different
	from the replaced vertex, so the first part of
	Theorem~\ref{thm:tetrahedral-replacement}(iv) gives
	$\dist_{G_1}(v_i,v_j)=\dist_G(v_i,v_j)$. Applying the second part of
	Theorem~\ref{thm:tetrahedral-replacement}(iv) to $G_2=R_{v_i}(G_1)$
	with $q=v_j$ gives $\dist_{G_2}(w_i,v_j)=\dist_{G_1}(v_i,v_j)+1$, and
	applying it to $\mathcal{R}(G)=R_{v_j}(G_2)$ with $q=w_i$ gives
	$\dist_{\mathcal{R}(G)}(w_j,w_i)=\dist_{G_2}(v_j,w_i)+1$. Combining
	these three equalities yields (c).
\end{proof}

\begin{proof}[Proof of Theorem~\ref{thm:main}]
	Let $G_0$ be the $\frac{3}{2}$-tough plane triangulation with no
	2-factor constructed in~\cite{Shan2025}. Since a Hamiltonian cycle is a
	2-factor and $K_4$ is Hamiltonian, $G_0$ is not Hamiltonian and
	$G_0\ne K_4$. Moreover, since $G_0$ has no 2-factor,
	Theorem~\ref{thm:twofactor} implies that $G_0$ has two distinct vertices
	of degree $3$ at distance at most $2$; in particular,
	$D_3(G_0)\ne\emptyset$. Write $D_3(G_0)=\{v_1,\ldots,v_m\}$, where
	$m\ge 1$.

	Define $G^{(0)}=G_0$ and $G^{(s+1)}=\mathcal{R}(G^{(s)})$ for $s\ge 0$.
	For $1\le i\le m$, let $v_i^{(0)}=v_i$ and, for $s\ge 0$, let
	$v_i^{(s+1)}$ be the successor of $v_i^{(s)}$ in $G^{(s+1)}$. We claim
	that for every $s\ge 0$:
	\begin{enumerate}[(1)]
		\item $G^{(s)}$ is a non-Hamiltonian $\frac{3}{2}$-tough plane
		triangulation, and $G^{(s)}\ne K_4$;
		\item $D_3(G^{(s)})=\{v_1^{(s)},\ldots,v_m^{(s)}\}$; and
		\item $\dist_{G^{(s)}}(v_i^{(s)},v_j^{(s)})=\dist_{G_0}(v_i,v_j)+2s$
		for all distinct $i,j\in\{1,\ldots,m\}$.
	\end{enumerate}
	This holds for $s=0$. If it holds for some $s\ge 0$, then $\mathcal{R}$
	is applicable to $G^{(s)}$ and, by Lemma~\ref{lem:iterate}, (1)--(3)
	hold for $s+1$; here $G^{(s+1)}\ne K_4$ because $G^{(s+1)}$ is not
	Hamiltonian. This proves the claim.

	Now let $r=\lceil \ell/2\rceil$, so that $r\ge 1$ and $2+2r\ge \ell+2$,
	and let $G_\ell=G^{(r)}$. Let $u$ and $v$ be distinct vertices of degree
	$3$ in $G_\ell$. By (2), $u=v_i^{(r)}$ and $v=v_j^{(r)}$ for some
	distinct $i$ and $j$, and by (3) and Lemma~\ref{lem:independent},
	\[
	\dist_{G_\ell}(u,v)=\dist_{G_0}(v_i,v_j)+2r\ge 2+2r\ge \ell+2.
	\]
	This proves (i). Since $2+2r\ge 4$, Theorem~\ref{thm:twofactor} shows
	that $G_\ell$ has a 2-factor, which is (ii). Statement (iii) is part of
	(1).

	Finally, $G_\ell$ is $\frac{3}{2}$-tough by (1). Since $G_\ell\ne K_4$
	is a plane triangulation containing a vertex $u$ of degree $3$, we have
	$|V(G_\ell)|\ge 5$, so $G_\ell-N_{G_\ell}(u)$ has $u$ as an isolated
	vertex and contains at least one further vertex. Hence
	$c(G_\ell-N_{G_\ell}(u))\ge 2$ and
	\[
	\tau(G_\ell)\le \frac{|N_{G_\ell}(u)|}{c(G_\ell-N_{G_\ell}(u))}
	\le \frac{3}{2}.
	\]
	Therefore $\tau(G_\ell)=\frac{3}{2}$, which completes the proof.
\end{proof}

\section{Proof of Theorem~\ref{thm:tetrahedral-replacement}}
\label{sec:replacement}

Throughout this section, let $G$, $v$, $a,b,c$, and $x,y,z,w$ be as in the
definition of the tetrahedral replacement, and put
\[
H=R_v(G),\qquad A=\{x,y,z,w\},\qquad B=\{a,b,c\}.
\]
Recall that $H-A=G-v$ and that $B$ induces a triangle in $G-v$. Every edge
of $H$ leaving $A$ ends in $B$; each vertex of $B$ has exactly two
neighbors in $A$, both of which lie in $\{x,y,z\}$; and $w$ has no neighbor
outside $A$.

\begin{proof}[Proof of Theorem~\ref{thm:tetrahedral-replacement}]
	\emph{(i)} The tetrahedral replacement replaces the three triangular
	faces $vabv$, $vbcv$, $vcav$ of $G$ by the nine triangular faces
	\[
	abxa,\quad bcyb,\quad cazc,\quad axza,\quad bxyb,\quad cyzc,\quad
	xywx,\quad yzwy,\quad zxwz
	\]
	of $H$, and it leaves every other face unchanged. Hence $H$ is a plane
	triangulation. For later use we record the degrees: $d_H(w)=3$,
	$d_H(x)=d_H(y)=d_H(z)=5$, $d_H(u)=d_G(u)+1$ for $u\in B$, and
	$d_H(q)=d_G(q)$ for every other vertex $q$ of $G$.

	\smallskip
	\emph{(ii)} Suppose that $G$ is $\frac{3}{2}$-tough, and let
	$S\subseteq V(H)$ with $d:=c(H-S)\ge 2$. Set
	\[
	S_0=S\setminus A,\qquad k=|S\cap A|,\qquad
	g=c(G-S_0),\qquad g'=c((G-v)-S_0),
	\]
	where $S_0$ is regarded as a subset of $V(G)\setminus\{v\}$. We show
	that $|S|\ge\frac{3}{2}d$.

	Since $B$ induces a triangle, the vertices of $B\setminus S_0$ all lie
	in one component of $(G-v)-S_0$. The graph $G-S_0$ is obtained from
	$(G-v)-S_0$ by adding $v$ and its edges to $B\setminus S_0$, so $g=g'$
	if $B\setminus S_0\ne\emptyset$, and $g=g'+1$ otherwise.

	Suppose first that $k\le 3$. Then $A\setminus S$ is nonempty. As
	$A$ induces a $K_4$, $H[A\setminus S]$ is connected. The graph $H-S$ is obtained from
	$(G-v)-S_0=(H-A)-S_0$ by adding the connected graph $H[A\setminus S]$
	together with its edges to $B\setminus S_0$. Hence $d\in\{g',g'+1\}$,
	and in particular
	\begin{equation}\label{eq:components}
		d\le g'+1\le g+1.
	\end{equation}
	If $k\le 1$, then at most one vertex of $\{x,y,z\}$ lies in $S$, so
	every vertex of $B\setminus S_0$ still has a neighbor in $A\setminus S$.
	Thus $H[A\setminus S]$ is attached to the component of $(G-v)-S_0$
	containing $B\setminus S_0$ if $B\setminus S_0\ne\emptyset$, and forms a
	component of $H-S$ by itself otherwise; in either case $d=g$. Then
	$g=d\ge 2$, and the $\frac{3}{2}$-toughness of $G$ gives
	\[
	|S|\ge |S_0|\ge \tfrac{3}{2}g=\tfrac{3}{2}d.
	\]
	Now let $k\in\{2,3\}$. If $g\ge 2$, then by \eqref{eq:components} and
	the $\frac{3}{2}$-toughness of $G$,
	\[
	|S|=|S_0|+k\ge \tfrac{3}{2}g+k\ge \tfrac{3}{2}(g+1)\ge \tfrac{3}{2}d.
	\]
	If $g=1$, then \eqref{eq:components} and $d\ge 2$ give $d=2$, so we
	need $|S|\ge 3$. This is immediate if $k=3$. Let $k=2$. If
	$S_0=\emptyset$, then $H-S$ is connected: indeed, $G-v$ is connected
	because $G$ is connected and $B$ induces a triangle, and $H[A\setminus S]$
	is connected and contains a vertex of $\{x,y,z\}$, which has a neighbor
	in $B$. This contradicts $d=2$. Hence $S_0\ne\emptyset$, and
	$|S|=|S_0|+2\ge 3$.

	Finally, let $k=4$. Then $H-S=(G-v)-S_0=G-(S_0\cup\{v\})$, and so
	$c(G-(S_0\cup\{v\}))=d\ge 2$. By the $\frac{3}{2}$-toughness of $G$,
	$|S_0|+1\ge\frac{3}{2}d$, and therefore $|S|=|S_0|+4\ge\frac{3}{2}d$.
	This proves (ii).

	\smallskip
	\emph{(iii)} First suppose that $G$ has a Hamiltonian cycle $C$. Then
	$C$ uses exactly two of the three edges incident with $v$; by the
	symmetry of the construction, we may assume that these are $av$ and
	$vb$. Replacing the subpath $avb$ of $C$ by the path $azwxyb$ yields a
	Hamiltonian cycle of $H$.

	Conversely, let $C$ be a Hamiltonian cycle of $H$, and let $r$ be the
	number of edges of $C$ with exactly one end in $A$. Summing the degrees in $C$ of the
	four vertices of $A$ gives
	\begin{equation}\label{eq:cutedges}
		8=2|E(C[A])|+r.
	\end{equation}
	Since $w$ has no neighbor outside $A$, both edges of $C$ at $w$ belong
	to $C[A]$; hence $|E(C[A])|\ge 2$ and $r\le 4$. Moreover, $r$ is even
	by \eqref{eq:cutedges}, and $r>0$ because $C$ is connected and
	$V(H)\setminus A\ne\emptyset$. Therefore $r\in\{2,4\}$. Recall that
	every edge of $C$ leaving $A$ ends in $B$.

	Suppose that $r=2$. By \eqref{eq:cutedges}, $C[A]$ has three edges. As
	a proper subgraph of the cycle $C$, the graph $C[A]$ is acyclic; being
	a forest with four vertices, three edges, and maximum degree at most
	two, it is a path with vertex set $A$. The two ends of this path are joined
	by $C$ to vertices $p,q\in B$, which are distinct, since otherwise the
	path together with $p$ would form a cycle properly contained in $C$.
	Replacing the $(p,q)$-segment of $C$ through $A$ by the path $pvq$ gives
	a Hamiltonian cycle of $G$.

	Suppose that $r=4$. By \eqref{eq:cutedges}, $C[A]$ has exactly two
	edges, namely the two edges of $C$ at $w$. Thus $C[A]$ consists of a
	path $t_1wt_2$ and an isolated vertex $u$, where
	$\{u,t_1,t_2\}=\{x,y,z\}$. Both edges of $C$ at $u$ leave $A$, and
	since $H$ is simple they end at the two distinct neighbors of $u$ in
	$B$. The four edges of $C$ leaving $A$ end in the three vertices of
	$B$, so some $h\in B$ is incident with two of them, and these two are
	the edges of $C$ at $h$. They cannot both join $h$ to $u$, as $H$ is
	simple, and they cannot join $h$ to $t_1$ and to $t_2$, since then
	$ht_1wt_2h$ would be a cycle properly contained in $C$. Hence one of
	them joins $h$ to $u$, and the other joins $h$ to one of $t_1,t_2$, say
	to $t_1$. Consequently, $C[A\cup\{h\}]$ is the path $uht_1wt_2$. The
	remaining neighbor of $u$ on $C$ and the remaining neighbor of $t_2$ on
	$C$ lie in $B\setminus\{h\}$; call them $p$ and $q$, respectively. They
	are distinct, since otherwise $puht_1wt_2p$ would be a cycle properly
	contained in $C$. Thus $\{p,q\}=B\setminus\{h\}$, and $C$ contains the
	segment $puht_1wt_2q$. Since $ph$, $hv$, and $vq$ are edges of $G$,
	replacing this segment by the path $phvq$ gives a Hamiltonian cycle of
	$G$. The two cases $r=2$ and $r=4$ are illustrated in
	Figure~\ref{fig:traces}. This proves (iii).

	\begin{figure}[ht]
		\centering
		\begin{minipage}[c]{0.48\textwidth}
			\centering
			\begin{tikzpicture}[scale=0.90]
				\gadgetcoordinates
				\gadgetedges{gray!65}
				\draw[cycleedge] (a)--(z)--(w)--(x)--(y)--(b);
				\gadgetlabels
			\end{tikzpicture}

			\small (a) $r=2$.
		\end{minipage}
		\hfill
		\begin{minipage}[c]{0.48\textwidth}
			\centering
			\begin{tikzpicture}[scale=0.90]
				\gadgetcoordinates
				\gadgetedges{gray!65}
				\draw[cycleedge] (c)--(z)--(a)--(x)--(w)--(y)--(b);
				\gadgetlabels
			\end{tikzpicture}

			\small (b) $r=4$, with $h=a$, $u=z$, $p=c$, and $q=b$.
		\end{minipage}
		\caption{Representative segments of a Hamiltonian cycle of $H$
			through $A$ in the two cases of the proof of
			Theorem~\ref{thm:tetrahedral-replacement}(iii). Only the bold
			edges belong to the displayed segment.}
		\label{fig:traces}
	\end{figure}

	\smallskip
	\emph{(iv)} Let $p,q\in V(G)\setminus\{v\}$. If a $(p,q)$-path in $G$
	passes through $v$, then it contains a subpath $svs'$ with distinct
	$s,s'\in B$, and replacing this subpath by the edge $ss'$ yields a
	shorter $(p,q)$-path in $G-v$. Hence $\dist_G(p,q)=\dist_{G-v}(p,q)$.
	Similarly, if a $(p,q)$-path in $H$ meets $A$, then each of its maximal
	subpaths with all internal vertices in $A$ has two distinct ends in
	$B$, and replacing every such subpath by the edge joining its ends
	yields a shorter $(p,q)$-path in $H-A$. Hence
	$\dist_H(p,q)=\dist_{H-A}(p,q)$. Since $H-A=G-v$, we conclude that
	$\dist_H(p,q)=\dist_G(p,q)$.

	For the second statement, put
	$\dist_{G-v}(B,q)=\min_{u\in B}\dist_{G-v}(u,q)$. Every $(v,q)$-path in
	$G$ consists of an edge $vu$ with $u\in B$ followed by a $(u,q)$-path in
	$G-v$, and conversely every such concatenation is a $(v,q)$-path; hence
	$\dist_G(v,q)=1+\dist_{G-v}(B,q)$. In $H$, the vertex $w$ has no
	neighbor in $B$, while each vertex of $B$ has a neighbor in
	$\{x,y,z\}=N_H(w)$; hence $\dist_H(w,u)=2$ for every $u\in B$. Let $Q$
	be a shortest $(w,q)$-path in $H$, and let $u$ be the first vertex of $Q$
	in $B$, which exists because $q\notin A$ and every edge leaving $A$
	ends in $B$. The subpath of $Q$ from $w$ to $u$ has length at least
	$\dist_H(w,u)=2$, and the subpath of $Q$ from $u$ to $q$ has length at
	least $\dist_H(u,q)=\dist_{G-v}(u,q)$ by the first statement. Hence
	$\dist_H(w,q)\ge 2+\dist_{G-v}(B,q)$. Conversely, choosing $u\in B$
	with $\dist_{G-v}(u,q)=\dist_{G-v}(B,q)$ and concatenating a $(w,u)$-path
	of length $2$ with a shortest $(u,q)$-path in $G-v$ gives a $(w,q)$-walk in
	$H$ of length $2+\dist_{G-v}(B,q)$. Therefore
	\[
	\dist_H(w,q)=2+\dist_{G-v}(B,q)=\dist_G(v,q)+1.
	\]
	This completes the proof of Theorem~\ref{thm:tetrahedral-replacement}.
\end{proof}

\section{Hamiltonian cycles versus 2-factors}\label{sec:twofactors}

We conclude by pointing out the essential difference between Hamiltonian
cycles and 2-factors with respect to the tetrahedral replacement: it is
exactly the connectedness of a Hamiltonian cycle. Let $G$, $v$,
$B=\{a,b,c\}$, $H=R_v(G)$, and $A=\{x,y,z,w\}$ be as in
Section~\ref{sec:replacement}. For a subgraph $F$ of $H$ and disjoint sets
$X,Y\subseteq V(H)$, let $E_F(X,Y)$ denote the set of edges of $F$ with one
end in $X$ and the other end in $Y$.

In the forward direction there is no difference between the two notions: a
2-factor of $G$ extends to a 2-factor of $H$ by replacing its path of
length two through $v$ with a path with vertex set $A$, exactly as in the proof
of Theorem~\ref{thm:tetrahedral-replacement}(iii). The difference appears
in the backward direction.

\begin{lemma}\label{lem:twofactortrace}
	Let $F$ be a 2-factor of $H$, and let $r=|E_F(A,V(H)\setminus A)|$.
	Then $r\in\{0,2,4\}$. Moreover, unless
	\begin{enumerate}[(1)]
		\item $r=0$, or
		\item $r=2$ and the two edges of $E_F(A,V(H)\setminus A)$ have the
		same end in $B$,
	\end{enumerate}
	there is a 2-factor $F'$ of $G$ that contains every edge of $F$ having
	no end in $A$.
\end{lemma}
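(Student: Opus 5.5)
We reuse the counting from the proof of Theorem~\ref{thm:tetrahedral-replacement}(iii). Summing $F$-degrees over $A$ gives $8=2|E(F[A])|+r$, so $r$ is even. Since $w$ has no neighbor outside $A$, both $F$-edges at $w$ lie in $F[A]$; hence $|E(F[A])|\ge 2$ and $r\le 4$. This gives $r\in\{0,2,4\}$.

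For the existence of $F'$, it remains to treat two cases: $r=2$ with distinct ends in $B$, and $r=4$. In each case we set $F_0=F-A$, the set of edges of $F$ with no end in $A$. We then add back $v$ using only edges at $v$, possibly together with some edges inside $B$.

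\emph{Case $r=2$, distinct ends.} Here $F[A]$ has three edges. It cannot contain a cycle: every cycle in $F[A]$ would be a component of $F$, and if it missed a vertex of $A$ that vertex would still need degree $2$, which the count does not allow. So $F[A]$ is a path with vertex set $A$, and its ends are joined to distinct $p,q\in B$. Replacing this $(p,q)$-segment by $pvq$ gives a 2-factor $F'$ of $G$. This $F'$ contains $F_0$, and every vertex keeps degree $2$.

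\emph{Case $r=4$.} Here $F[A]$ is a path $t_1wt_2$ together with an isolated vertex $u$. Both $F$-edges at $u$ go to the two distinct neighbors of $u$ in $B$. The four edges leaving $A$ land on three vertices of $B$, so some $h\in B$ receives two of them, and these are exactly its two $F$-edges. As in (iii), one of them goes to $u$. The other goes to $t_1$ (say), because $ht_1wt_2h$ is now allowed as a $4$-cycle component of $F$. So there are two subcases.

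\emph{Subcase (a): $h$ is adjacent to $u$ and $t_1$.} The remaining ends are $p$ of $u$ and $q$ of $t_2$, with $\{p,q\}=B\setminus\{h\}$. This holds because each vertex of $B$ has only two $A$-neighbors, which are used consistently; this needs a check, and if $p=q$ we get a closed component, which we must also handle. When $p\ne q$, we replace the segment $puht_1wt_2q$ by $phvq$, exactly as in (iii).

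\emph{Subcase (b): $h$ is adjacent to $t_1$ and $t_2$.} Then $ht_1wt_2h$ is a $4$-cycle component of $F$. The two edges at $u$ go to the other two vertices $p,q$ of $B$, forming a path $puq$ inside a component of $F$. In $G$ we replace $puq$ by $pvq$ and delete the $4$-cycle. But then $h$ becomes uncovered, because in $F_0$ the vertex $h$ has degree $0$. So this subcase needs a different fix: we add the triangle $hvq$? That would conflict with $q$'s degree. Instead we observe a geometric constraint: $t_1,t_2$ both adjacent to $h$ forces $\{t_1,t_2\}$ to be the two $A$-neighbors of $h$, and then $u$ is the $\{x,y,z\}$-vertex not adjacent to $h$, whose $B$-neighbors are exactly $B\setminus\{h\}$. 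I expect this subcase to be the main obstacle.

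My first attempt at subcase (b) is to use $F'=F_0-\{\text{edges of } F_0 \text{ at } p,q \text{ on the path}\}\cup\{pv,vq\}$ plus a cycle through $h$. This fails, since $h$ cannot get degree $2$ without changing $F_0$. So I would recheck the statement's exceptions. Likely the intended reading is that this configuration yields, in $G$, the replacement $puq\to pvq$ and $h$ uncovered, and is therefore genuinely impossible to repair.

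If subcase (b) indeed obstructs, then I suspect it is excluded by a parity or simple-graph argument I have missed. I would try to rule it out first by examining whether $F$ restricted near $h$ forces a contradiction, before concluding that the lemma needs an extra exceptional case.
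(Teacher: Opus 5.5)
Your proposal has a genuine gap. The case $r=4$ is never finished, and you lean towards concluding that subcase (b) is an obstruction needing an extra exception. That conclusion is wrong.

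The missing idea is to use the edges of the triangle $abc$ of $G$, together with one observation. If both $F$-edges at some $h\in B$ go into $A$, then $h$ has degree $0$ in $F_0$. So no edge of $G[B]$ at $h$ lies in $F$, and such edges can be added freely. Your claim that ``$h$ cannot get degree $2$ without changing $F_0$'' is therefore false.

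In subcase (b), $F$ contains the $4$-cycle $ht_1wt_2h$ and a path $puq$ with $\{p,q\}=B\setminus\{h\}$. Replace $puq$ by $pvhq$, that is, delete $A$ and add $pv$, $vh$, $hq$. This gives a 2-factor of $G$ containing $F_0$. You flagged the case $p=q$ in subcase (a) but left it open; there $F$ has the $6$-cycle component $puht_1wt_2p$, and you handle it by replacing this cycle with the triangle $phvp$.

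The paper does not analyse $F[A]$ here at all. Set $e_s=|E_F(\{s\},A)|$ for $s\in B$. Then $e_a+e_b+e_c=4$ and $e_s\le 2$, so the pattern is $(2,1,1)$ or $(2,2,0)$. In the first case, with $e_h=2$ and $e_p=e_q=1$, delete $A$ and add $vh$, $vp$, $hq$. In the second, with $e_h=e_p=2$, delete $A$ and add the triangle $vhp$. So the lemma holds as stated.

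A smaller error is in your case $r=2$: $F[A]$ need not be a path. The count $8=2\cdot 3+2$ is also realised when $F$ has the triangle $wt_1t_2$ as a component and the remaining vertex $u\in\{x,y,z\}$ sends its two $F$-edges to its two distinct neighbours in $B$. For instance, take the triangle $wyz$ together with the path $axb$. Your justification that ``the count does not allow'' this overlooks that the missing vertex gets its degree from edges leaving $A$. The construction still works, because you only need the two edges leaving $A$ to end at distinct $p,q$. Deleting $A$ from $F$ and adding $pv$ and $vq$ restores every degree, with no structural claim about $F[A]$; this is exactly how the paper argues.
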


\begin{proof}
	Summing the degrees in $F$ of the four vertices of $A$ gives
	$8=2|E(F[A])|+r$, so $r$ is even. Since both edges of $F$ at $w$ lie in
	$F[A]$, we have $|E(F[A])|\ge 2$ and hence $r\le 4$. Thus
	$r\in\{0,2,4\}$. Recall that every edge of $F$ leaving $A$ ends in $B$.

	Suppose that $r=2$ and that the two edges of $E_F(A,V(H)\setminus A)$
	end at distinct vertices $p,q\in B$. Delete $A$ from $F$, and add the
	vertex $v$ together with the edges $pv$ and $vq$. Every vertex of $G$
	has degree $2$ in the resulting spanning subgraph of $G$, which is
	therefore a 2-factor $F'$ of $G$ with the required property.

	Suppose that $r=4$. For $u\in B$, let $e_u=|E_F(\{u\},A)|$. Then
	$e_a+e_b+e_c=4$ and $e_u\le 2$ for each $u\in B$, so the nonzero
	entries of $(e_a,e_b,e_c)$ are, up to order, either $2,1,1$ or $2,2$.
	In the first case, say $e_p=2$ and $e_q=e_s=1$; delete $A$ from $F$,
	and add $v$ together with the edges $vp$, $vq$, and $ps$. In the second
	case, say $e_p=e_q=2$; delete $A$ from $F$, and add $v$ together with
	the edges $vp$, $vq$, and $pq$. In both cases the added edges are edges
	of $G$ that are not in $F$, because both edges of $F$ at $p$ go to $A$.
	Every vertex of $G-v$ keeps its degree $2$, and $v$ receives degree
	$2$. Hence the resulting graph is a 2-factor $F'$ of $G$ with the
	required property.
\end{proof}

The two exceptional traces in Lemma~\ref{lem:twofactortrace} are shown in
Figure~\ref{fig:twofactor-obstructions}. In case (1), $F[A]$ has four
edges, so it is a 4-cycle, and this 4-cycle is a component of $F$. In case
(2), let $b$ be the common end in $B$ of the two edges of $F$ leaving $A$;
their ends in $A$ are distinct since $H$ is simple. Then $F[A]$ has three
edges and degree sequence $(2,2,1,1)$, so $F[A]$ is a path with vertex set $A$,
and $F[A\cup\{b\}]$ is a 5-cycle that is a component of $F$.

\begin{figure}[H]
	\centering
	\begin{minipage}[c]{0.48\textwidth}
		\centering
		\begin{tikzpicture}[scale=0.90]
			\gadgetcoordinates
			\gadgetedges{gray!65}
			\draw[cycleedge] (w)--(x)--(y)--(z)--cycle;
			\gadgetlabels
		\end{tikzpicture}

		\small (a) Case (1): a 4-cycle contained in $H[A]$.
	\end{minipage}
	\hfill
	\begin{minipage}[c]{0.48\textwidth}
		\centering
		\begin{tikzpicture}[scale=0.90]
			\gadgetcoordinates
			\gadgetedges{gray!65}
			\draw[cycleedge] (b)--(x)--(z)--(w)--(y)--cycle;
			\gadgetlabels
		\end{tikzpicture}

		\small (b) Case (2): a 5-cycle contained in $H[A\cup\{b\}]$.
	\end{minipage}
	\caption{Components of a 2-factor of $H$ that prevent the 2-factor from
		being pulled back through the tetrahedral replacement. Only the bold
		edges belong to the displayed component.}
	\label{fig:twofactor-obstructions}
\end{figure}

Neither trace can occur in a Hamiltonian cycle of $H$, since a Hamiltonian
cycle contains no cycle as a proper subgraph. Both can occur in a
2-factor, whose components need not be connected to one another. This is consistent with
Theorems~\ref{thm:twofactor} and~\ref{thm:main}: for the graph $G_0$ of
Section~\ref{sec:counterexample}, the graph $\mathcal{R}(G_0)$ has a
2-factor although $G_0$ has none, and since the reductions of
Lemma~\ref{lem:twofactortrace} at distinct replaced vertices do not
interfere with one another, every 2-factor of $\mathcal{R}(G_0)$ must
exhibit one of the two exceptional traces at some replaced vertex.
\section*{Declaration on the use of AI tools}

ChatGPT 5.6 was used to explore an initial construction for Theorem~\ref{thm:main}, which the author subsequently refined. Claude (Anthropic) was used to assist with language editing, grammar, clarity, and formatting, and to check the proofs. The author reviewed and verified all AI-assisted contributions and takes full responsibility for the content of the manuscript.

\end{document}